\newtheorem{theorem}{Theorem}
\theoremstyle{remark}
\theoremstyle{definition}
\newtheorem{lemma}{Lemma}[section]
\begin{document}

\title{\bf Classification of Realizations of Lie Algebras\\ of Vector Fields on a Circle}

\author{Stanislav Spichak}
\date{}
\maketitle

\begin{center}
Institute of Mathematics of NAS of Ukraine,\\
3 Tereshchenkivska Str., Kyiv-4, Ukraine\\
e-mail: stas.math@gmail.com
\end{center}

\begin{abstract}
Finite-dimensional subalgebras of a Lie algebra of
smooth vector fields on a~circle, as well as
piecewise-smooth global transformations of a~circle on itself, are considered. A
canonical forms of realizations of two- and three-dimensional noncommutative
algebras are obtained. It is shown that all other realizations
of smooth vector fields are reduced to this form using global
transformations. Some combinatorial formulas for the number of inequivalent realizations of these algebras
are obtained.
\end{abstract}

\section{Introduction}
The description of Lie algebra representations by vector fields on a line and a~plane  was
first considered by S.~Lie 
\cite[S.~1--121]{spichak:Lie2}. However, this problem is still of great interest and widely applicable.
In spite to its importance for applications, only recently a complete description of
realizations begun to be investigated systema\-ti\-cally.
Furthermore, only since the late eighties of the last century papers on that subject were published regularly.
In particular, different problems of realizations were studied such as realizations of first order differential operators of a special form  in \cite{spichak:Gonzalez}, realizations of physical algebras (Galilei, Poincar\'e and Euclid ones)
 in \cite{spichak:Yehorchenko,spichak:Zhdanov2}. In \cite{spichak:Popovych} it was constructed
a complete set of inequivalent realizations of real Lie algebras of dimension no greater than four in vector fields on a space of
an arbitrary (finite) number of variables. In that paper one can obtain a more complete review on the subject and a list of references.

Almost in all works on the subject realizations are considered up to local equivalence transformations.
Attempts to classify realizations  of Lie algebras in vector fields on some manifold with respect to
global equivalence transformations (on the whole manifold) have been made only in a few papers (see, e.g.,
\cite{spichak:Zaitseva,spichak:Strigunova,spichak:Sergeev}).
In these papers it is proved that (up to isomorphism) there are only three algebras, namely,
one-dimensional, noncommutative two-dimensional and three-dimensional isomorphic to $\mathrm{sl}(2,\mathbb R)$,
that can be realized by analytic vector fields on the circle.

The purpose of this paper is to construct all inequivalent realization of the two-dimensional algebras on the circle.
For this reason, we are not limited by the requirement of analyticity, as it is considered in \cite{spichak:Zaitseva,spichak:Strigunova,spichak:Sergeev}.

On the circle $S^1$ we introduce the parameter $\theta\in {\mathbb
R}$, $0\leqslant\theta < 2\pi$. Then the vector fields on $S^1$ can
be  represented as a vector field $v(\theta)\frac{d}{d\theta}$,
where $v(\theta)$ is a smooth real function on the circle
\cite{spichak:Presli}. One more possibility is to assume that
$\theta\in\mathbb R$, and $v(\theta)$  is a smooth $2\pi$-periodic
function on a line. We consider the vector fields of the class $C^1$
(with continuously-differentiable functions $v(\theta)$), which is
natural to require when calculate the commutators of two vector
fields.

We also introduce a class of transformations $f\colon S^1\rightarrow S^1$, which is defined by the following properties:
\begin{list}{--}{\labelwidth=3ex\labelsep=1ex\leftmargin=5.5ex
\topsep1mm\parsep0mm\itemsep1mm\partopsep0mm}

\item it is one-to-one mapping of the circle onto itself;
\item $f(\theta)$ is continuous at any point $\theta\in S^1$;
\item it is continuously differentiable at all points except a finite number of them;
\item the derivative $f'(\theta)$ tends to $-\infty$ or $+\infty$ at all points of discontinuity;
\item under a change of the coordinate $\tilde{\theta}=f(\theta)$ a vector field of the class $C^1$ transforms to a vector field of the same class.
\end{list}

Such transformations are defined as {\it equivalence transformations} of vector fields. We call two realizations of an algebra of vector fields inequivalent,
if it is impossible to transform realizations to each other by compositions of equivalence transformations.

We assume that $f(0)=0$, without loss of generality. Indeed, any equivalence transformation is obvious a composition of
some equivalence transformations with fixed zero and a rotation of the circle. So, we can perform preliminary classification of  inequivalent realizations
up to such transformations (with fixed zero) and then complete classification taking into account the rotations of the circle. Then it is easy to see that
$f(\theta)$ is monotone on the whole interval
$0\leqslant\theta<2\pi$. Degree of the map equals $\pm1$, $\deg f=\pm 1$ (see \cite{spichak:Dubrovin}). Depending on the sign of the degree
the function is monotonically decreasing or increasing.

Besides the transformations of $f$, we introduce the following class of {\it homotopy} of a~circle into itself. Let us take two points $\theta_1,\theta_2\in S^1$,
$\theta_1<\theta_2$. We define a~family of transformations  $F_{\theta_1,\theta_2}(t,\theta)\colon [0,1]\times S^1\rightarrow S^1$ of the circle the as follows.

In the case $\theta_1\not=0$
$$
F_{\theta_1,\theta_2}(t,\theta)=\left\{
    \begin     {array}{clc}
   \displaystyle  \theta+t\dfrac{\theta}{\theta_1}(\theta_2-\theta_1)\quad & \text{if}\ \ 0\leq\theta <\theta_1 ,\\[1ex]
     \theta+t(\theta_2-\theta) & \text{if}\ \ \theta_1\leq\theta <\theta_2 ,\\[1ex]
    \theta & \text{if}\ \ \theta_2\leq\theta < 2\pi .
    \end {array}
    \right .
$$

In the case $\theta_1=0$
$$
F_{\theta_1,\theta_2}(t,\theta)=\left\{
    \begin     {array}{clc}
     \theta(1-t) & \text{if}\ \ 0\leq\theta <\theta_2 ,\\[1ex]
   \displaystyle  \theta-t\frac{2\pi-\theta}{2\pi-\theta_2}\theta_2\quad & \text{if}\ \ \theta_2\leq\theta < 2\pi .
    \end {array}
    \right .
$$


Obviously, if some subset of singular points of a vector field (i.e., zeros of its coefficient \cite{spichak:Dubrovin}) forms an open set
$(\theta_1,\theta_2)$ on $S^1$ then this interval can be constricted to a point by an appropriate homotopy.
Therefore, we can consider vector fields singularities  which do not form intervals, although the number of singularities may be infinite.
We denote the class of such vector fields by $\mathcal{C}$. They form the set of realizations of the one-dimensional algebra.
In general case such vector fields cannot be simplified by equivalence
transformations, because of infinity  the number of singular points.

We are interested in all inequivalent realizations of
finite-dimensional Lie algebras by vector fields from the class $\mathcal{C}$.

\section{Two-dimensional commutative algebra}
Further we denote vector fields $v(\theta)\frac{d}{d\theta}$ and
$w(\theta)\frac{d}{d\theta}$ by $V$ and $W$, correspondingly.  Let
they commute, and $V\in \mathcal{C}$. A singular point $\theta_0$ of
the field $V$ is said to be {\it degenerate} if $v'(\theta_0)=0$. It
is easily to show that if $0\leqslant\theta_0<\theta_1 < 2\pi$ are
two degenerate points, such that the interval $(\theta_0,\theta_1)$
does not contain additional degenerate points, then
$w(\theta)=\lambda v(\theta)$ on $(\theta_0,\theta_1)$, where
$\lambda\not=0$ is an arbitrary constant. This follows from the fact
that $W\in \mathcal{C}$ and the continuity of the derivative of the
function $w(\theta)$. In particular, if there is no degenerate
point, or it is unique, then $w(\theta)=\lambda v(\theta)$ on $S^1$.
 In this case the vector fields $V$ and $W$ are linearly dependent, and there is no realization of two-dimensional commutative Lie algebra (see \cite{spichak:Zaitseva,spichak:Strigunova,spichak:Sergeev}).

We assume that we have more than one degenerate point. Without loss
of generality, we may assume that point $0$ (and $2\pi$) is
degenerate. Then the function~$w(\theta)$ can be described as
follows. We take an arbitrary point $\theta\in S^1$. If it is
non-degenerate for function $v(\theta)$, then, obviously, there is a
maximum interval $(\theta_0,\theta_1)$ with two degenerate endpoints
on it, such that $0\leqslant\theta_0<\theta<\theta_1 < 2\pi$. Then
$w(\theta)=\lambda v(\theta)$ on this interval. Further, considering
point $\theta'\not\in[\theta_0,\theta_1]$, we repeat the procedure,
if it is not degenerate. Again we have relation
$w(\theta)=\lambda'v(\theta)$  on some interval. Here the values
$\lambda$ and $\lambda'$ can  be not equal.

If the point $\theta'$ is degenerate, then, by virtue of the fact that $V\in \mathcal{C}$, we can find arbitrarily close to it a non-degenerate point $\theta''$.
So we repeat the above procedure for $\theta''$.  Thus if we know the  function $v(\theta)$, we can construct values of the function~$w(\theta)$ in all
non-degenerate points. For degenerate points we can construct values of $w(\theta)$ in arbitrary close to them points.
If the number of degenerate points of the function $v(\theta)$ is infinite, then the number of non-equivalent realizations of the
vector field $W$ is infinite. So the number of realizations of two-dimensional commutative algebra is infinite.

\section{Two-dimensional noncommutative algebra.\\ Auxiliary lemmas}

Let vector fields $V$ and $W$ generate the noncommutative algebra.
It can be assumed up to their linear combining that they satisfy the commutation relation $[V,W]=W$,
which implies the relation between the functions $v(\theta)$ and~$w(\theta)$:
\begin{equation} \label{spichak:eq1}
v(\theta)w'(\theta)-v'(\theta)w(\theta)=w(\theta).
\end{equation}

\begin{lemma}\label{spichak:lemma1}
There is a singular point for the field $W$.
\end{lemma}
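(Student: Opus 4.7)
The plan is to argue by contradiction: assume $w(\theta)\neq 0$ for all $\theta\in S^1$, and derive an impossibility from the commutation relation \eqref{spichak:eq1} together with the $2\pi$-periodicity of $v$ and $w$.

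First I would observe that if $w$ never vanishes on $S^1$, then by continuity $w$ has constant sign, so $1/w$ is a continuous, sign-definite, $2\pi$-periodic function. In particular
\begin{equation*}
\int_0^{2\pi}\frac{d\theta}{w(\theta)}\neq 0.
\end{equation*}

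Next I would rewrite equation \eqref{spichak:eq1} in a form amenable to integration. Dividing by $w^2$ (permissible under the standing assumption) gives
\begin{equation*}
\frac{v'(\theta)w(\theta)-v(\theta)w'(\theta)}{w^2(\theta)}=-\frac{1}{w(\theta)},
\end{equation*}
and the left-hand side is exactly $\frac{d}{d\theta}\!\left(\frac{v(\theta)}{w(\theta)}\right)$. Thus the ratio $v/w$ is a $C^1$, $2\pi$-periodic function whose derivative equals $-1/w$.

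Integrating this identity over a full period, the left side vanishes by periodicity, whereas the right side equals $-\int_0^{2\pi}\!d\theta/w(\theta)$, which is nonzero by the first step. This contradiction forces $w$ to vanish somewhere on $S^1$, i.e.\ $W$ has a singular point.

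The argument is essentially a one-line application of periodicity once the equation is put in the exact-derivative form, so I do not anticipate a substantial obstacle; the only point to be careful about is checking that $v/w$ is genuinely differentiable on the whole circle (which follows from $V,W\in C^1$ and $w\neq 0$), so that the fundamental theorem of calculus may be applied without boundary contributions.
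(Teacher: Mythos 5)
Your proof is correct and is essentially the paper's argument: the paper's explicit solution formula \eqref{spichak:eq2} is precisely the integrated form of your identity $\frac{d}{d\theta}(v/w)=-1/w$, and both arguments conclude by playing the periodicity of $v/w$ against the sign-definiteness of $\int_0^{2\pi}d\theta/w(\theta)$. Your exact-derivative presentation is a slightly cleaner packaging of the same idea.
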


\begin{proof}
Assume that the field $W$ has no singular points, i.e.,
$w(\theta)>0$ (or $w(\theta)<0$) for all values $0\leqslant\theta <
2\pi$. Then from (\ref{spichak:eq1}) we can obtain the solution for
the function~$v(\theta)$ on the whole interval $[0,2\pi)$:
\begin{equation} \label{spichak:eq2}
v(\theta)= \left(-\int_0^\theta\dfrac{d\vartheta}{w(\vartheta)}+\lambda\right)w(\theta),
\end{equation}
where $\lambda$ is some constant.
The function $w(\theta)$  is $2\pi$-periodic on the line.
Since the integrand in (\ref{spichak:eq2}) is positive, we have $v(0)\not=v(2\pi)$.
It contradicts the periodicity of the function $v(\theta)$.
\end{proof}

\begin{lemma}\label{spichak:lemma2}
Singular points of $W$ are singular points of $V$.
\end{lemma}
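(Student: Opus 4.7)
The plan is to reinterpret the commutation relation (\ref{spichak:eq1}) as a linear first-order ODE for $w$ with $v$ playing the role of a coefficient, and then to exploit the class condition $W\in\mathcal{C}$ to derive a contradiction.

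First I would rewrite (\ref{spichak:eq1}) as $v(\theta)w'(\theta)=\bigl(1+v'(\theta)\bigr)w(\theta)$. Suppose, towards a contradiction, that $\theta_0$ is a singular point of $W$ (so $w(\theta_0)=0$) that is \emph{not} a singular point of $V$ (so $v(\theta_0)\neq 0$). By continuity of $v$, there is an open neighbourhood $U$ of $\theta_0$ on which $v$ is bounded away from zero. Dividing by $v$ on $U$ turns the relation into the homogeneous linear first-order ODE $w'(\theta)=\frac{1+v'(\theta)}{v(\theta)}w(\theta)$ with continuous coefficient, whose unique solution through $(\theta_0,0)$ is given by the integrating-factor formula $w(\theta)=w(\theta_0)\exp\!\bigl(\int_{\theta_0}^{\theta}\frac{1+v'(s)}{v(s)}\,ds\bigr)\equiv 0$.

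Consequently every point of the open interval $U$ would be a singular point of $W$, so the singular set of $W$ would contain a nontrivial interval. This contradicts the assumption $W\in\mathcal{C}$ recalled at the end of Section~1, hence $v(\theta_0)=0$, which is exactly the claim of the lemma.

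I do not anticipate a serious obstacle. The only mild subtlety is the regularity check: $v$ is only of class $C^1$, but on $U$ the coefficient $(1+v')/v$ is continuous, which is sufficient both for the explicit integrating-factor formula above and for uniqueness of the zero solution. A pleasant feature of this approach is that it treats degenerate and non-degenerate singularities of $W$ on the same footing, so no case split based on the value of $w'(\theta_0)$ is required.
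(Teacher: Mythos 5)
Your proposal is correct and follows essentially the same route as the paper: both rewrite (\ref{spichak:eq1}) near $\theta_0$ as the linear ODE $w'=\frac{1+v'}{v}\,w$, conclude from uniqueness that $w\equiv 0$ on a neighbourhood, and contradict $W\in\mathcal{C}$. The only cosmetic difference is that you justify uniqueness via the explicit integrating-factor formula for a linear equation with continuous coefficient, whereas the paper invokes Picard's theorem through a Lipschitz condition in $w$; the substance is identical.
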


\begin{proof}
Assume that $w(\theta_0)=0$. Suppose that $v(\theta_0)\not=0$. Then there exists some neighborhood $U_{\theta_0}$
of this point, where $v(\theta)\not=0$. In this neighborhood, the equation (\ref{spichak:eq1}) can be rewritten as:
\begin{equation} \label{spichak:eq3}
w'(\theta)=\dfrac{1+v'(\theta)}{v(\theta)}\,w(\theta).
\end{equation}
Since $w(\theta_0)=0$ and the right-hand side of equation (\ref{spichak:eq3}) satisfies the Lipschitz condition with respect to $w$
uniformly on $\theta$ then, by virtue of Picard's theorem \cite{spichak:Stepanov}, the differential equation (\ref{spichak:eq3}) has a unique solution in the neighborhood of
$\theta_0$. Obviously, such a solution is $w\equiv 0$, what contradicts the fact that $W\in \mathcal{C}$.
\end{proof}

\begin{lemma}\label{spichak:lemma3}
The number of singular points of the field $W$ is finite.
\end{lemma}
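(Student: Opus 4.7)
The plan is to argue by contradiction. Suppose $W$ has infinitely many singular points. By compactness of $S^1$ this zero set has an accumulation point $\theta^*$; continuity of $w$ forces $w(\theta^*)=0$, and Lemma~\ref{spichak:lemma2} then gives $v(\theta^*)=0$. Passing to a subsequence, I take zeros $\theta_n\to\theta^*$ of $w$, monotone and with $\theta_n\neq\theta^*$.

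Next I extract ``bubble intervals'' accumulating at $\theta^*$ on which $w$ has constant sign. The hypothesis $W\in\mathcal{C}$ says that the zero set of $w$ contains no interval, so inside each gap $(\theta_n,\theta_{n+1})$ I can pick $\alpha_n$ with $w(\alpha_n)\neq 0$ and then set $\gamma_n=\sup\{t\leqslant\alpha_n:w(t)=0\}$ and $\beta_n=\inf\{t\geqslant\alpha_n:w(t)=0\}$. By continuity $w(\gamma_n)=w(\beta_n)=0$, and by construction $w$ does not vanish on $(\gamma_n,\beta_n)$. Clearly $\gamma_n,\beta_n\to\theta^*$.

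Now I apply Rolle's theorem to $w$ on $[\gamma_n,\beta_n]$ to obtain $\xi_n\in(\gamma_n,\beta_n)$ with $w'(\xi_n)=0$ and $w(\xi_n)\neq 0$. Evaluating (\ref{spichak:eq1}) at $\xi_n$ gives $-v'(\xi_n)w(\xi_n)=w(\xi_n)$, so $v'(\xi_n)=-1$. Since $v\in C^1$ and $\xi_n\to\theta^*$, continuity of $v'$ yields $v'(\theta^*)=-1$. Combined with $v(\theta^*)=0$, Taylor expansion makes $v$ strictly monotone in some neighborhood of $\theta^*$, so $\theta^*$ is an isolated zero of $v$. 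But Lemma~\ref{spichak:lemma2} forces $v(\theta_n)=0$ for every $n$, and $\theta_n\to\theta^*$ with $\theta_n\neq\theta^*$ — a contradiction.

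The only delicate point is the second step, namely extracting the ``bubble intervals'' $(\gamma_n,\beta_n)$ when the zero set of $w$ could in principle be a Cantor-like set accumulating at $\theta^*$; this is where the hypothesis $W\in\mathcal{C}$ (the singular set has empty interior) is essential. Everything afterwards is a one-line consequence of identity (\ref{spichak:eq1}) at a stationary point of $w$ together with continuity of $v'$.
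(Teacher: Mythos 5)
Your proof is correct and follows essentially the same route as the paper: both arguments locate stationary points of $w$ with $w\neq 0$ accumulating at the limit $\theta^*$, read off $v'=-1$ there from (\ref{spichak:eq1}), and contradict this with the zeros of $v$ (via Lemma~\ref{spichak:lemma2}) accumulating at $\theta^*$. Your explicit ``bubble interval'' construction merely fills in what the paper dismisses as ``easy to show,'' and your closing step (an isolated zero of $v$ from $v'(\theta^*)=-1$) is an equivalent substitute for the paper's second application of Rolle's theorem to $v$, which yields $v'(\theta^*)=0$ directly.
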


\begin{proof}
Assume that the number of singular points is infinite. Since $S^1$ is a compact set then there is a monotonically
increasing (or decreasing) a sequence~$\{\theta_n\}$ converging to some point $\theta_0$ such that $w(\theta_n)=0$. It is easy to show that
for any $n$ there is a non-singular point $\hat\theta_n\in (\theta_n,\theta_{n+1})$ satisfying the condition $w'(\hat\theta_n)=0$.
Then, from equation (\ref{spichak:eq1}) we see that $v'(\hat\theta_n)=-1$. Since $\displaystyle\lim_{n\rightarrow\infty}\hat\theta_n=\theta_0$,
then, by virtue of continuous differentiability of function $v(\theta)$, we have $\displaystyle\lim_{n\rightarrow\infty}v'(\hat\theta_n)=v'(\theta_0)=-1$.
On the other hand, from Lemma \ref{spichak:lemma2} we get that $v(\theta_n)=v(\theta_{n+1})=0$. Hence there is a point
$\tilde\theta_n\in (\theta_n,\theta_{n+1})$ such that $v'(\tilde\theta_n)=0$. And since
$\displaystyle\lim_{n\rightarrow\infty}\tilde\theta_n=\theta_0$, then $\displaystyle\lim_{n\rightarrow\infty}v'(\tilde\theta_n)=v'(\theta_0)=0$.
Therefore, we have a~contradiction.
\end{proof}

\begin{lemma}\label{spichak:lemma4}
If $\theta_0$ is a singular point of $W$, then it is degenerate for this field {\rm(}i.e., $w'(\theta_0)=0${\rm)}.
\end{lemma}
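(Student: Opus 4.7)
The plan is to argue by contradiction: suppose $w(\theta_0)=0$ but $w'(\theta_0)\neq 0$, and derive an inconsistency with the $C^1$ regularity of $v$.

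By Lemma~\ref{spichak:lemma3}, the singular points of $W$ are finite in number, so $\theta_0$ is isolated and there is a punctured neighborhood $U\setminus\{\theta_0\}$ on which $w(\theta)\neq 0$. On this punctured neighborhood, equation (\ref{spichak:eq1}) may be divided by $w(\theta)$ to yield the linear ODE
$$
v'(\theta)-\frac{w'(\theta)}{w(\theta)}v(\theta)=-1,
$$
which, using the integrating factor $1/w(\theta)$, integrates to
$$
v(\theta)=w(\theta)\left(-\int_{a}^{\theta}\frac{d\vartheta}{w(\vartheta)}+C\right)
$$
on each one-sided neighborhood of $\theta_0$ (with $a$ chosen on the relevant side). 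Differentiating this expression gives
$$
v'(\theta)=w'(\theta)\left(-\int_{a}^{\theta}\frac{d\vartheta}{w(\vartheta)}+C\right)-1.
$$

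The key observation is that the assumption $w'(\theta_0)\neq 0$ forces $w(\vartheta)\sim w'(\theta_0)(\vartheta-\theta_0)$ as $\vartheta\to\theta_0$, so the integral $\int_{a}^{\theta}d\vartheta/w(\vartheta)$ has a logarithmic singularity and diverges to $\pm\infty$ as $\theta\to\theta_0$. Since $w'(\theta)\to w'(\theta_0)\neq 0$, the product in the displayed formula for $v'(\theta)$ is unbounded, and therefore $v'(\theta)\to\pm\infty$ as $\theta\to\theta_0$. This contradicts the $C^1$ hypothesis on $v$, which requires $v'$ to be continuous (and hence bounded) at $\theta_0$.

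The main obstacle is verifying that the logarithmic divergence of the integral really produces an unbounded $v'$ regardless of how one chooses the constant $C$ and the base point $a$; this must be checked on each side of $\theta_0$ separately, since $w$ may change sign there. Once this is done, the contradiction with continuous differentiability of $v$ is immediate, and we conclude $w'(\theta_0)=0$.
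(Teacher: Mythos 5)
Your proof is correct, but it follows a genuinely different route from the paper's. You argue by contradiction: using Lemma~\ref{spichak:lemma3} to isolate $\theta_0$, you integrate the linear ODE (\ref{spichak:eq1}) explicitly on a punctured one-sided neighborhood where $w\neq 0$, obtaining $v=w\bigl(C-\int_a^\theta d\vartheta/w\bigr)$ and hence $v'=w'\bigl(C-\int_a^\theta d\vartheta/w\bigr)-1$, and observe that $w'(\theta_0)\neq0$ forces a logarithmic divergence of the integral and therefore an unbounded $v'$, contradicting $v\in C^1$; the choice of $C$ and $a$ is indeed immaterial, since it only shifts the divergent integral by a finite constant, and a contradiction on one side of $\theta_0$ already suffices. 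The paper instead argues directly and locally: using Lemma~\ref{spichak:lemma2} it writes $v(\theta)=v'(\theta_0)(\theta-\theta_0)+h(\theta)$ and $w(\theta)=w'(\theta_0)(\theta-\theta_0)+g(\theta)$ with $h,g,h',g'$ vanishing at $\theta_0$, substitutes into the commutation relation, divides by $\theta-\theta_0$, and passes to the limit, which yields $w'(\theta_0)=0$ with no contradiction argument. The paper's computation needs neither Lemma~\ref{spichak:lemma3} nor the nonvanishing of $w$ near $\theta_0$ and avoids your side-by-side case analysis; your version, on the other hand, makes the analytic mechanism explicit (the logarithmic singularity of $\int d\vartheta/w$ at a simple zero of $w$) and reuses the same explicit-solution formula (\ref{spichak:eq2}) that drives Lemma~\ref{spichak:lemma1}, which is also the formula underlying the canonical-form construction (\ref{spichak:eq8}).
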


\begin{proof}
Since $v(\theta_0)=0$ (see Lemma \ref{spichak:lemma2}), then
$$
v(\theta)=v'(\theta_0)(\theta-\theta_0)+h(\theta), \ w(\theta)=w'(\theta_0)(\theta-\theta_0)+g(\theta),
$$
where $h$, $g$ are continuously differentiable functions and
\begin{equation} \label{spichak:eq4}
h(\theta_0)=g(\theta_0)=h'(\theta_0)=g'(\theta_0)=0.
\end{equation}
Furthermore, taking to account equation (\ref{spichak:eq1}) we have
\begin{gather*}
\left[v(\theta)\dfrac{d}{d\theta},w(\theta)\dfrac{d}{d\theta}\right]=\left[(\theta-\theta_0)(v'(\theta_0)g'(\theta)-
w'(\theta_0)h'(\theta))\right.\\
\left.\qquad+h(\theta)(w'(\theta_0)+g'(\theta))-g(\theta)(v'(\theta_0)+h'(\theta))\right]\dfrac{d}{d\theta}\\
\qquad=\left[w'(\theta_0)(\theta-\theta_0)+g(\theta)\right]\dfrac{d}{d\theta}.
 \end{gather*}
Let us divide both sides of this equality by $\theta-\theta_0$ and take the limit $\theta\rightarrow\theta_0$. Then from
relations (\ref{spichak:eq4}) and L'Hopital theorem one can easily see that $w'(\theta_0)=0$.
\end{proof}

\begin{lemma}\label{spichak:lemma5}
Under the equivalence transformation $\tilde{\theta}=f(\theta)$ singular {\rm(}resp.\ regular{\rm)} points of the vector field $W$
are mapped to singular {\rm(}resp.\ regular{\rm)} points of the vector field $\tilde W=\tilde w(\tilde\theta)\frac{d}{d\tilde\theta}$.
\end{lemma}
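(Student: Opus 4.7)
My plan is to work from the standard change-of-variable formula $\tilde w(\tilde\theta) = w(\theta) f'(\theta)$ (with $\theta = f^{-1}(\tilde\theta)$), which arises because $\frac{d}{d\theta} = f'(\theta)\frac{d}{d\tilde\theta}$. Fixing $\theta_0 \in S^1$ and setting $\tilde\theta_0 = f(\theta_0)$, I would split the analysis according to the behavior of $f$ at $\theta_0$.

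In the generic case, when $\theta_0$ is a point of continuous differentiability of $f$ with $f'(\theta_0)$ finite and nonzero, the formula $\tilde w(\tilde\theta_0) = w(\theta_0) f'(\theta_0)$ gives immediately that $\tilde w(\tilde\theta_0) = 0$ if and only if $w(\theta_0) = 0$, so both directions of the lemma are proved in this case.

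The other potential cases must be disposed of by invoking the $C^1$-preservation property of equivalence transformations (the fifth defining property). If $f'(\theta_0) = 0$ at a continuity point of $f'$, then for any $C^1$ field with $w(\theta_0) \neq 0$ one computes $\tilde w'(\tilde\theta) = w'(\theta) + w(\theta) f''(\theta)/f'(\theta)$, and the second term blows up at $\tilde\theta_0$ unless $f''$ also vanishes at $\theta_0$, and then to matching order. Since a monotone bijection of $S^1$ cannot be locally flat, this configuration is excluded by the requirement that $\tilde W$ be $C^1$. Hence at continuity points of $f'$ we always have $f'(\theta_0) \neq 0$, reducing matters to the generic case.

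The genuine obstacle is the remaining case, where $\theta_0$ is a point of discontinuity of $f'$, so $f'(\theta) \to \pm\infty$. If $w(\theta_0) \neq 0$, then $w(\theta) f'(\theta)$ is unbounded as $\theta \to \theta_0$, contradicting $\tilde W \in \mathcal{C}$; so such a $\theta_0$ must be singular for $W$, establishing the ``regular to regular'' direction. For the ``singular to singular'' direction we must show that the $0 \cdot \infty$ limit $\lim_{\theta \to \theta_0} w(\theta) f'(\theta)$ is actually $0$ and not some nonzero value permitted by the indeterminate form. I would use Lemma~\ref{spichak:lemma4} to obtain $w'(\theta_0) = 0$, so that $w(\theta) = o(\theta - \theta_0)$, and combine this with the constraint that $\int_0^{2\pi} f'(\theta)\,d\theta$ is finite (since $f$ maps $S^1$ to itself), which forces $(\theta - \theta_0) f'(\theta) \to 0$ along suitable sequences. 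A L'Hopital computation on $w(\theta)/(1/f'(\theta))$ then yields $\tilde w(\tilde\theta_0) = 0$. Making this limit rigorous under only $C^1$ regularity of $w$ and only the qualitative divergence of $f'$ is the main technical step of the proof.
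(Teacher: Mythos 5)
Your overall strategy---a case analysis on the behaviour of $f'$ at $\theta_0$, driven by the identity $\tilde w(f(\theta))=w(\theta)f'(\theta)$ and the $C^1$-preservation property---is reasonable, but two of your cases are not actually closed, and both failures trace to the same source: $f$ is only piecewise continuously differentiable, so $f''$ is not available. In the subcase $f'(\theta_0)=0$ you differentiate $\tilde w$ via $f''/f'$ and then dismiss the surviving possibility by asserting that a monotone bijection of $S^1$ ``cannot be locally flat''; that assertion is false for an isolated zero of $f'$ (a cube-root-type bijection, $\theta\mapsto\theta_0+(\theta-\theta_0)^3$ locally, is monotone with $f'(\theta_0)=0$), so monotonicity does not exclude the case, and the computation meant to exclude it presupposes a second derivative. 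The conclusion is still reachable: the difference quotient $\bigl(\tilde w(\tilde\theta)-\tilde w(\tilde\theta_0)\bigr)/(\tilde\theta-\tilde\theta_0)=w(\theta)f'(\theta)/\bigl(f(\theta)-f(\theta_0)\bigr)$ cannot have a finite limit when $w(\theta_0)\ne0$, since $f'/(f-f(\theta_0))$ is the derivative of $\log|f-f(\theta_0)|$, which is unbounded as $f(\theta)\to f(\theta_0)$; but that argument is not in your text. The paper disposes of the entire ``regular point with singular image'' configuration in one stroke: if $\tilde w(f(\theta_0))=0$ while $w(\theta_0)\ne0$, then $f$ solves $f'=\tilde w(f)/w$ with a right-hand side Lipschitz in $f$ and an initial value at a zero of $\tilde w$, so Picard uniqueness forces $f$ to be constant near $\theta_0$, contradicting injectivity---no discussion of whether $f'(\theta_0)$ is zero, finite or infinite is needed.

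The more serious gap is the one you flag yourself: the singular-to-singular direction at a point where $f'\to\pm\infty$. The proposed ``L'Hopital computation on $w(\theta)/(1/f'(\theta))$'' again differentiates $f'$ and is therefore unavailable, and the two correct ingredients you cite ($w(\theta)=o(\theta-\theta_0)$ from Lemma~\ref{spichak:lemma4}, and $(\theta_n-\theta_0)f'(\theta_n)\to0$ along some sequence because $\int f'$ is finite) are never assembled into a proof. They do suffice, but the missing step is to note that $\tilde w(f(\theta))\to\tilde w(f(\theta_0))$ by continuity, so the limit of $w(\theta)f'(\theta)$ exists and may be evaluated along your one good sequence, where it equals $\lim\bigl[w(\theta_n)/(\theta_n-\theta_0)\bigr]\cdot\bigl[(\theta_n-\theta_0)f'(\theta_n)\bigr]=0$. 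As written, you have announced the main technical step rather than performed it. The paper avoids this indeterminate limit altogether: having proved the regular-to-regular direction, it uses Lemma~\ref{spichak:lemma3} to find a punctured neighbourhood of regular points where $f'$ exists and has constant sign, concludes that $f^{-1}$ is again an equivalence transformation, and applies the regular-to-regular statement to $f^{-1}$, so that a regular image $f(\theta_0)$ would force $\theta_0$ itself to be regular. You should either complete your limit argument along the lines above or adopt this inversion trick.
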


\begin{proof}
a) Let $w(\theta_0)\not=0$. Then there is a finite derivative $f'(\theta_0)$.
If it is not, then there is a neighborhood of the singular point $\theta_0$, where it is continuous
and $\displaystyle\lim_{\theta\rightarrow\theta_0}f'(\theta)=\pm\infty$ (the sign depends on $\deg f$). For the transformed vector field
$\tilde W$ we have the relation
$\tilde w(\tilde\theta)\frac{d}{d\tilde\theta}=w(\theta)f'(\theta)\frac{d}{d\theta}$.
Hence, $\tilde w(f(\theta))=w(\theta)f'(\theta)$. Since $w(\theta)\not=0$ in the above neighborhood, then
\begin{equation} \label{spichak:eq5}
f'(\theta)=\dfrac{\tilde w(f(\theta))}{w(\theta)},
\end{equation}
and from the continuity of the functions $f$ and $\tilde w$ it is following that the limit $\displaystyle\lim_{\theta\rightarrow\theta_0}f'(\theta)$ is finite.
Recall that a continuity of the function $\tilde w$ follows from the properties that any equivalence transformation $f$ maps $C^1$-vector fields to $C^1$-vector
fields.

Now suppose that $\tilde w(f(\theta_0))=0$ (i.e., $f(\theta_0)$ is singular). Since the right side of differential equation (\ref{spichak:eq5}) satisfies Lipschitz condition
for the argument  $f(\theta)$ uniformly on $\theta$ (function $\tilde w$ is continuously differentiable), then in a neighborhood of the point
$\theta_0$ a unique solution $f(\theta)$ exists. The constant solution in this neighborhood $f(\theta)=f(\theta_0)=\mathrm{const}$  satisfies equation
(\ref{spichak:eq5}). But the definition of the equivalence transformation contradicts to the property of one-to-one mapping.
That is $\tilde\theta_0=f(\theta_0)$ is a regular point.

b) Let $\theta_0$ be a singular point and suppose that $\tilde w(f(\theta_0))\not=0$ (i.e., $f(\theta_0)$ is regular). By Lemma~\ref{spichak:lemma3} there is a neighborhood of $\theta_0$,
in which all points are regular (except $\theta_0$). From the previous part of the proof we get (see (\ref{spichak:eq5})) for the regular points
relations $f'(\theta)>0$ ($f'(\theta)<0$) if $\deg f>0$ ($\deg f<0$). Therefore,  it is easy to show that the inverse map
$\theta=f^{-1}(\tilde\theta)$  belongs to the class of equivalence transformations. Applying the reasoning of the previous part,
we see that the regular point $\tilde\theta_0$ goes to the regular point $\theta_0$ under the mapping $f^{-1}$. So we have a contradiction.
\end{proof}

From the last Lemma it follows that the number of singular points is an invariant under any equivalence transformation.

\section{Realizations of a two-dimensional noncommutative algebra}

Taking into account Lemmas~\ref{spichak:lemma1} and~\ref{spichak:lemma3}, we suppose that there is a vector field $W$ with $n\geqslant 1$ singular points $\theta_k$.
It is easy to show that applying the composition
of equivalence transformations and rotation of the circle we achieve that
$\theta_k=\dfrac{2\pi k}{n}$, $k=0,1,\ldots,n-1$. Consider the interval
$\bigtriangleup_k=(\theta_k,\theta_{k+1})$ and denote \[ \bar\theta_k=\dfrac{\theta_k+\theta_{k+1}}{2}=\dfrac{\pi(2k+1)}{n}.\]
We construct the following continuously differentiable  transformation~for $f$ on $\bigtriangleup_k$ satisfying the conditions
\begin{equation} \label{spichak:eq6}
f(\theta_k)=\theta_k,\ \ f(\theta_{k+1})=\theta_{k+1},\ \ f(\bar\theta_k)=\bar\theta_k.
\end{equation}

Suppose that $w(\theta)>0$, $\theta\in\bigtriangleup_k$. Consider the Cauchy problem for this interval:
\begin{equation} \label{spichak:eq7}
w(\theta)f'(\theta)=1-\cos(nf(\theta)),\quad f(\bar\theta_k)=\bar\theta_k.
\end{equation}
Its solution is
\begin{equation} \label{spichak:eq8}
f(\theta)=\dfrac{2}{n}\arctan\left(-nI(\theta)\right)+\theta_k,
\quad\mbox{where}\quad
I(\theta)=\int_{\bar\theta_k}^{\theta}\dfrac{d\theta}{w(\theta)}.
\end{equation}
The integral $I(\theta)$ converges for any point of the interval $\bigtriangleup_k$.
By virtue of Lemma~\ref{spichak:lemma4} the integral diverges at the ends of this interval:
\begin{equation*}
\lim_{\theta\rightarrow\theta_k+0}I(\theta)=-\infty, \ \ \lim_{\theta\rightarrow\theta_{k+1}-0}I(\theta)=+\infty .
\end{equation*}

It is easy to show that the transformation (\ref{spichak:eq8}) satisfies conditions (\ref{spichak:eq6})
and maps the vector field $w(\theta)\frac{d}{d\theta}$ to the vector field $(1-\cos(n\tilde{\theta}))\frac{d}{d\tilde{\theta}}$.

If $w(\theta)<0$ for $\theta\in\bigtriangleup_k$ then we can analogously obtained the equivalence transformation
that maps the vector field $w(\theta)\frac{d}{d\theta}$ to $(\cos(n\tilde{\theta})-1)\frac{d}{d\tilde{\theta}}$.

Now, if we consider the vector field at the intervals $\bigtriangleup_k$, $k=0,1,\ldots,n-1$, then substituting the function
$w(\theta)=\pm(\cos(n\theta)-1)$ (omitting the tilde)  in equation (\ref{spichak:eq1}), it is easy to obtain the solution for the function
$v(\theta)$ on these specified intervals:
\begin{equation} \label{spichak:eq9}
v(\theta)=\dfrac{1}{n}\sin(n\theta)+\lambda_k(1-\cos(n\theta)),\quad  \lambda_k\in{\mathbb R}.
\end{equation}

As a result, we have the following assertion.
\begin{theorem}
Any realization of the two-dimensional noncommutative algebra of vector fields on a circle is equivalent to the form
\begin{equation} \label{spichak:eq10}
\displaystyle\left<\sigma_k(\theta)(1-\cos(n\theta))\dfrac{d}{d\theta},\
 \left(\dfrac{1}{n}\sin(n\theta)+\lambda_k(\theta)(1-\cos(n\theta))\right)\dfrac{d}{d\theta}\right>,
\end{equation}
where $\lambda_k(\theta)$ and $\sigma_k(\theta)=\pm 1$ are constants on the intervals $\displaystyle\left(\dfrac{2\pi k}{n},\dfrac{2\pi(k+1)}{n}\right)$.
\end{theorem}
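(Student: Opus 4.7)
The plan is to execute the construction outlined in the passage preceding the theorem and to verify that the piecewise-defined map it produces is a legitimate global equivalence transformation of $S^1$. By Lemmas~\ref{spichak:lemma1} and~\ref{spichak:lemma3}, the field $W$ has a finite, nonempty set of singular points $\theta_0,\ldots,\theta_{n-1}$. As a first step I would use a rotation together with an equivalence transformation to move these points to the equispaced positions $2\pi k/n$; Lemma~\ref{spichak:lemma5} guarantees that any equivalence transformation preserves the set of singular points, so after this reduction $W$ is a $C^1$ vector field vanishing precisely at $\theta_k=2\pi k/n$ for $k=0,\ldots,n-1$.

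The central step is the explicit change of variable on each interval $\Delta_k$. There $w$ has constant sign $\sigma_k=\pm 1$, and solving the Cauchy problem (\ref{spichak:eq7}) yields the formula (\ref{spichak:eq8}). I would then verify the following: inside $\Delta_k$ the integrand $1/w$ is continuous and nonvanishing, so $f$ is $C^1$ with $f'$ of constant sign; by Lemma~\ref{spichak:lemma4} one has $w'(\theta_k)=0$, so $w$ vanishes to at least second order at the endpoints, forcing $I(\theta)\to\mp\infty$ as $\theta\to\theta_k^{+}$ or $\theta_{k+1}^{-}$; combined with $\arctan(\mp\infty)=\mp\pi/2$ this shows that $f$ extends continuously to $\bar{\Delta}_k$ with $f(\theta_k)=\theta_k$ and $f(\theta_{k+1})=\theta_{k+1}$; and equation (\ref{spichak:eq7}) together with $w(\theta_k)=0$ implies that $f'(\theta)\to+\infty$ at the endpoints, which matches the blow-up condition in the definition of an equivalence transformation. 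Gluing these pieces across the singular points yields a global map under which $W$ is sent to $\sigma_k(1-\cos(n\tilde{\theta}))\,d/d\tilde{\theta}$.

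Once $w$ is in this canonical form, I would substitute it into the commutation relation (\ref{spichak:eq1}); on each $\Delta_k$ this reduces to a first-order linear ODE for $v$ whose general solution is (\ref{spichak:eq9}), with an integration constant $\lambda_k$ that may depend on $k$. Continuity of $v$ across the singular points is automatic because by Lemma~\ref{spichak:lemma2} both one-sided limits of $v$ vanish at each $\theta_k$. Assembling these ingredients produces the realization (\ref{spichak:eq10}). The main obstacle I anticipate is the verification that the glued map $f$ is truly an equivalence transformation in the sense of the Introduction --- in particular the $C^1$-to-$C^1$ property at the singular points and the correct blow-up of $f'$ there; this is precisely where Lemma~\ref{spichak:lemma4} does the decisive work, by guaranteeing the right asymptotic behavior of $w$ and $I(\theta)$ near each $\theta_k$. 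Everything else reduces to the routine computation of solutions to (\ref{spichak:eq7}) and (\ref{spichak:eq1}).
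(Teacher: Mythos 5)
Your proposal is correct and follows essentially the same route as the paper: reduce to equispaced singular points via Lemmas~\ref{spichak:lemma1}, \ref{spichak:lemma3} and \ref{spichak:lemma5}, rectify $w$ on each $\bigtriangleup_k$ by solving the Cauchy problem (\ref{spichak:eq7}) with the divergence of $I(\theta)$ at the endpoints supplied by Lemma~\ref{spichak:lemma4}, and then solve (\ref{spichak:eq1}) for $v$ to obtain (\ref{spichak:eq9}). Your additional care in checking that the glued map is a genuine equivalence transformation (the blow-up of $f'$ at the $\theta_k$) is a point the paper leaves implicit, but it is the same argument; note only that $C^1$ plus $w(\theta_k)=w'(\theta_k)=0$ gives $w(\theta)=o(\theta-\theta_k)$ rather than literal second-order vanishing, which still suffices for the divergence of $I$.
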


Moreover, using the equivalence transformations the set of realizations  (\ref{spichak:eq10}) can be reduced to
simpler realizations where $\lambda_k(\theta)=0$. The proof of this assertion is based on the construction of
corresponding transformations on each of the intervals $\displaystyle\left(\dfrac{2\pi k}{n},\dfrac{2\pi(k+1)}{n}\right)$. So we have
\begin{theorem}
Any realization of the two-dimensional noncommutative algebra of vector fields on a circle is equivalent to the form
\begin{equation} \label{spichak:eq11}
\displaystyle\left<\sigma_k(\theta)(1-\cos(n\theta))\dfrac{d}{d\theta},\
\dfrac{1}{n}\sin(n\theta)\dfrac{d}{d\theta}\right>,
\end{equation}
where $\sigma_k(\theta)=\pm 1$ on the intervals $\displaystyle\left(\dfrac{2\pi k}{n},\dfrac{2\pi(k+1)}{n}\right)$.
\end{theorem}

\section{Realizations of three-dimensional algebra $\mathrm{sl}(2,\mathbb R)$}
The commutation relations of the algebra $\mathrm{sl}(2,\mathbb R)=\left<W,V,Z\right>$ are
\begin{equation} \label{spichak:eq12}
[V,W]=W,\ [Z,V]=Z,\ [W,Z]=-2V,
\end{equation}
where $W=w(\theta)\frac{d}{d\theta}$, $V=v(\theta)\frac{d}{d\theta}$, $Z=z(\theta)\frac{d}{d\theta}$.\\
Using relations (\ref{spichak:eq12}) one can write out the system of differential equations for the functions
$w(\theta)$, $v(\theta)$, $z(\theta)$. It is not difficult to obtain the general solution of it.
Thus, we have the following result
\begin{theorem}
All realizations of the algebra $\mathrm{sl}(2,\mathbb R)$ up to the equivalence transformation
have one of the following form
\begin{equation}\label{spichak:eq13}
\begin{array}{l}
\displaystyle \left<W,V,Z\right>=\left<\sigma_k(\theta)(1-\cos(n\theta))\dfrac{d}{d\theta},\
\dfrac{1}{n}\sin(n\theta)\dfrac{d}{d\theta}\right.\\[2mm]
\displaystyle\hspace{2.5cm} \left.\dfrac{1}{n^2}\sigma_k(\theta)(1+\cos(n\theta))\dfrac{d}{d\theta}\right>
\end{array}
\end{equation}
where $\sigma_k(\theta)=\pm 1$ are constants on the intervals
$\displaystyle\left(\dfrac{2\pi k}{n},\dfrac{2\pi(k+1)}{n}\right)$.
\end{theorem}

So, all realizations two- and three-dimensional algebras (\ref{spichak:eq1}) and
(\ref{spichak:eq12}), respectively, are defined by a set of n signs,
which we call the {\it signature} $\pi$ of these realizations. So far we have considered
equivalence transformations when $f(0)=0$ with degree of map $\deg f=1$.
If, in addition to such transformations, we consider rotations of a circle ($f(0)\ne 0$),
as well as reflection ($\deg f=-1$) then some of such realizations with different signature
$\pi$ and $\tilde\pi$ can be equivalent.

\section{Number of realizations. Some combinatorial formulas}
{\bf A. Rotations.} Let us consider the transformation-rotation of a circle $O_k$, $k\in {\mathbb Z}$ with angle
$\dfrac{2\pi}{n}$.
Let's call $\sigma=(\sigma_0,\sigma_1,\ldots,\sigma_{n-1})$ as signature with length $n$ of realizations
(\ref{spichak:eq11}),  (\ref{spichak:eq13}). Then clockwise rotation
$O_k$ of the circle leads to substitution of signature: $O_k(\sigma)=\tilde\sigma=(\tilde\sigma_0,\tilde\sigma_1,\ldots,\tilde\sigma_{n-1})$, where
$\tilde\sigma_l=\sigma_{n-k+l(\mod n)}$, $l=0,1,\ldots,n-1$.

Signatures $\sigma$ and $\tilde\sigma$ we called equivalent relatively rotation $O_k$.
The hole set of signatures is divided into equivalence classes. Let us call signature $\sigma$ as
{\it $d$-periodic} if $d\in {\mathbb Z}$, $1\leq d\leq n$, d is minimal such that $O_d(\sigma)=\sigma$.

It is not difficult to show:\\
1) that $d$ is divider of $n$: $d|n$;\\
2) if $d\neq d'$, then $d$- and $d'$-periodic signatures are not equivalent.

Let we denote:\\
-- ${\it L}_n$ as number of equivalence classes;\\
-- ${\it M}_d$ as number of equivalence classes of $d$-periodic signatures with length $n$;\\
-- ${\it N}_d$ as  number of different (maybe equivalent) $d$-periodic signatures with length $d$.

Then, obviously, ${\it N}_d=d{\it M}_d$. We have
\begin{equation} \label{spichak:eq14}
{\it L_n}=\sum_{d|n}{\it M}_d= \sum_{d|n}\dfrac{{\it N}_d}{d}.
\end{equation}
Because the hole number of different signatures is equally $2^n$ then
\begin{equation} \label{spichak:eq15}
\sum_{d|n}{\it N}_d=2^n,\  {\it N}_1=2.
\end{equation}
So, we have recurrence formula (\ref{spichak:eq15}) for calculation of ${\it N}_d$,
and then for number of equivalence classes (\ref{spichak:eq14}).

For example, if $n=p$ is
a prime number, then ${\it N}_1+{\it N}_p=2^p$, i.e. ${\it N}_p=2^p-2$.

If $n=p_1p_2$, $p_i$ are prime numbers, then
${\it N}_1+{\it N}_{p_1}+{\it N}_{p_2}+{\it N}_{p_1p_2}=2^{p_1p_2}$, i.e.
${\it N}_n=2^n-2^{p_1}-2^{p_2}+2$.\\[2mm]
${\it L_n}=\dfrac{{\it N}_1}{1}+\dfrac{{\it N}_{p_1}}{p_1}+\dfrac{{\it N}_{p_2}}{p_2}+\dfrac{{\it N}_{p_1p_2}}{p_1p_2}=
\vspace{3mm}\dfrac{2}{1}+\dfrac{2^{p_1}-2}{p_1}+\dfrac{2^{p_2}-2}{p_2}+\dfrac{2^n-2^{p_1}-2^{p_2}+2}{p_1p_2}$.

Let's $d=p_1^{\alpha_1}p_2^{\alpha_2}\ldots p_s^{\alpha_s}$. We introduce the following notation:
$d_i=\dfrac{d}{p_i}$, $d_{ij}=\dfrac{d}{p_ip_j}$, $i,j=1,\ldots ,s$.
Using the induction method we can proof the following
\begin{theorem}
Suppose that $n=p_1^{\alpha_1}p_2^{\alpha_2}\ldots p_s^{\alpha_s}$, where $p_i$ are prime numbers,
$\alpha_i\in {\mathbb Z}$. Then
$$
{\it N}_d=2^d-\sum_i2^{d_i}+\sum_{i< j}2^{d_{ij}} .
$$
\end{theorem}

{\bf B. Reflection.} Let us consider the transformation-reflection ${\it T}$ on "vertical" axe,
that is such a transformation from the degree of mapping $\deg {\it T}=-1$, that is
$\theta\rightarrow\theta'= 2\pi-\theta$, $0\leq\theta< 2\pi$.
It's easy to show that under transformation  ${\it T}$ a realization with the signatura $\sigma$ transforms
to a realization with a signatura
$T(\sigma)=\tilde\sigma=(-\sigma_{n-1},-\sigma_{n-2},\ldots,-\sigma_{0})$.
It is not difficult to show that $O_kT=TO_{-k}$. Also, for realizations (\ref{spichak:eq11}),  (\ref{spichak:eq13})
any equivalence transformation is composition: $f=f_00_kT$, where $f_0(0)=0$.

Let denote ${\it K}_n$ as number of equivalence classes respectively equivalence transformation
$0_k$ and $T$, $k\in {\mathbb Z}$. Then it is not difficult to proof the next
\begin{theorem}
1) If n is odd number then ${\it  K}_n=\dfrac{{\it L}_n}{2}$.\\
2) If n is even number then ${\it K}_n=\dfrac{{\it L}_n+{\it I}_n}{2}$;\\
where ${\it I}_n$ is a number of equivalence classes (with respect to transformations $O_k$) with signatures having the same number of $+$ and $-$, and which are invariant under map $T$. If $n=2k$, then ${\it I}_n=2^{k-1}$.
\end{theorem}



\end{document}